\newcommand{\supp}{\operatorname{supp}}
\newcommand{\C}{\mathbb{C}}
\renewcommand{\vec}[1]{\mathbf{#1}}
\DeclareMathOperator{\tr}{tr}
\DeclareMathOperator{\Res}{Res}
\DeclareMathAlphabet{\varmathbb}{U}{bbold}{m}{n}
\DeclareMathOperator*{\Exp}{\mathbb{E}}
\begin{document}

\newtheorem{theorem}{Theorem}
\newtheorem{lemma}{Lemma}
\newtheorem{claim}{Claim}
\newtheorem{proposition}{Proposition}
\newtheorem{exercise}{Exercise}
\newtheorem{corollary}{Corollary}
\theoremstyle{definition}
\newtheorem{definition}{Definition}

\newcommand{\eps}{\epsilon}
\newcommand{\Z}{{\mathbb{Z}}}
\newcommand{\F}{{\mathbb{F}}}
\newcommand{\Var}{\mathop\mathrm{Var}}
\newcommand{\Cov}{\mathop\mathrm{Cov}}
\renewcommand{\Re}{\mathop\mathrm{Re}}
\newcommand{\frob}[1]{\left\| #1 \right\|_{\rm F}}
\newcommand{\opnorm}[1]{\left\| #1 \right\|_{\rm op}}
\newcommand{\bopnorm}[1]{\bigl\| #1 \bigr\|_{\rm op}}
\newcommand{\norm}[1]{\left\| #1 \right\|}
\newcommand{\bnorm}[1]{\bigl\| #1 \bigr\|}
\newcommand{\abs}[1]{\left| #1 \right|}
\newcommand{\inner}[2]{\left\langle #1, #2 \right\rangle}

\newcommand{\wf}{\widehat{f}}
\newcommand{\wg}{\widehat{g}}
\newcommand{\wG}{\widehat{G}}
\newcommand{\wH}{\widehat{H}}
\newcommand{\wpsi}{\widehat{\psi}}
\newcommand{\wpsidag}{\widehat{\psi^\dagger}}
\newcommand{\U}{\textsf{U}}
\newcommand{\GL}{\textsf{GL}}
\newcommand{\SL}{\mathsf{SL}}
\newcommand{\fsub}{\operatorname{F}}
\newcommand{\dmin}{d_{\min}}
\newcommand{\planch}{\mathcal{P}}
\newcommand{\frack}[2]{#1 / #2}

\title{Regarding a Representation-Theoretic Conjecture of Wigderson}

\author{Cristopher Moore\\Computer Science Department\\University of New Mexico\\and the Santa Fe Institute\\{\texttt {moore@cs.unm.edu}} 
\and Alexander Russell\\Computer Science and Engineering\\University of Connecticut\\{\texttt{acr@cse.uconn.edu}}}

\maketitle

\begin{abstract}
We show that there exists a family of irreducible representations
$\rho_i$ (of finite groups $G_i$) such that, for any constant $t$, the
average of $\rho_i$ over $t$ uniformly random elements $g_1, \ldots,
g_t \in G_i$ has operator norm $1$ with probability approaching 1 as
$i \rightarrow \infty$.  This settles a conjecture of Wigderson in the negative.
\end{abstract}

\section{Introduction}

There are known families of finite groups, such as
$\SL_2(\F_p)$ as $p \rightarrow \infty$, which yield expanding Cayley
graphs over a constant number of uniformly random group elements $g_1,\ldots,g_t$~\cite{BG08}.  In particular,
for any nontrivial irreducible representation $\rho$ of such a group, the expectation of $\rho$ over the $g_i$ has operator norm bounded below $1$:
\begin{equation}
\label{eq:rep-norm}
\Exp_{\{g_1,\ldots,g_t\}} \left[ \opnorm{ \frac{1}{t} \sum_{i=1}^t \rho(g_i) } \right] \leq 1 - \epsilon\,,
\end{equation}
where $t > 1$ and $\epsilon > 0$ are constants depending only on
the group family.  

A conjecture of A. Wigderson states that there are universal constants $t$ and $\eps > 0$ such that~\eqref{eq:rep-norm} holds for any group $G$ and any nontrivial irrep $\rho \in \wG$.  This would imply that for each $\rho$, a constant number of random elements make $G$ ``act like an expander'' with respect to $\rho$, and therefore that $O(\log |\wG|)$ random elements suffice to turn $G$ into an expander.  This holds, in particular, for abelian groups: while no constant number of elements suffice to make $\Z_2^n$ into an expander, or even to generate the entire group, just $t=2$ elements suffice to bound the expected norm of any one irrep, with a universal constant $\epsilon > 0$.


Our strategy will be to work in a family of finite groups of the form $G=K^n$.  Such groups simultaneously possess high-dimensional representations and the property that any collection of a constant number of group elements generate a subgroup of constant size. We then show that a representation $\rho$ selected at random according to the Plancherel distribution contains a copy of the trivial representation when restricted to almost all relevant small subgroups.  This implies that, with high probability in the random elements $g_1,\ldots,g_t$, the expectation of $\rho$ over these elements has operator norm $1$.

Our proof focuses on the random variable 
\begin{equation}
\label{eq:xh}
X_H = \frac{\langle \Res_H \chi_\rho, 1\rangle_H}{d_\rho}\,,
\end{equation}
where $H$ is a fixed subgroup of a group $G$ and $\rho$ is selected
according to the Plancherel measure.  Since $X_H$ is the
the dimensionwise fraction of $\rho$ that restricts to the trivial
representation under $H$, whenever $X_H > 0$ we have 
\[
\opnorm{ \Exp_{h \in H} \rho(h) } = 1 \, . 
\]
We will show that for groups of the form $K^n$, if $H=\langle g_1, \ldots, g_t \rangle$ then $X_H > 0$ with high probability.  We do this by computing the first two moments of $X_H$, first for general group-subgroup pairs, and then specializing to the groups $K^n$.
 
\paragraph{Notation; actions on the group algebra}  Given a finite group $G$, let $\wG$ denote its set of irreducible representations $\rho$, let $\chi_\rho(g) = \tr \rho(g)$ denote their characters, and let $d_\rho = \chi_\rho(1)$ denote their dimensions.  Let $\C[G]$ denote the group algebra on $G$. The left action of $G$ on $\C[G]$ obtained by linearly extending the rule $g: g' \mapsto g g'$ induces the \emph{regular representation} of $G$ with character
$$
R(g) = \begin{cases} |G| & \text{if $g = 1$}\,,\\
  0 & \text{otherwise.} \end{cases}
$$
As a consequence of character orthogonality, one can express $R$ as the sum
$$
R = \sum_{\rho \in \wG} d_\rho \chi_\rho \, . 
$$
The algebra $\C[G]$ can likewise be given the structure of a $G \times G$ representation by linearly extending the rule $(g_1, g_2): g \mapsto g_1 g g_2^{-1}$. The chararacter $B$ of this representation thus satisfies
$$
B(g_1, g_2) = \abs{ \left\{ g \,:\, g_1 g g_2^{-1} = g \right\} } \,.
$$
Again applying character orthogonality, $B$ can be expressed as the sum
$$
B(g_1, g_2) = \sum_{\rho \in \wG} \chi_\rho(g_1) \cdot \chi_\rho^*(g_2) \, .
$$
Finally, the Plancherel measure $\planch$ on $\wG$ assigns each representation $\tau$ the probability mass
\[
\planch(\rho) = \frac{d_\rho^2}{|G|} \, . 
\]

Returning now to the random variable $X_H$ defined above, we compute its first two moments.

\section{The first two moments}

\paragraph{The expectation}  For the first moment, observe that if $\rho \in \wG$ is distributed according to the Plancherel measure, then 
\begin{align*}
  \Exp_\rho X_H 
  &= \Exp_\rho \frac{\langle \Res_H \chi_\rho, 1\rangle_H}{d_\rho} 
  = \sum_\rho \frac{d_\rho^2}{|G|} \frac{\langle \Res_H \chi_\rho, 1\rangle_H}{d_\rho} 
  = \frac{1}{|G|} \sum_\rho d_\rho \langle \Res_H \chi_\rho, 1\rangle_H\\
&= \frac{1}{|G|} \left\langle \Res_H \sum_\rho  d_\rho
    \chi_\rho,
    1\right\rangle_H = \frac{1}{|G|} \langle \Res_H R,
    1\rangle_H = \frac{1}{|H|} \, . 
\end{align*}

\paragraph{The second moment}
For the second moment, observe that
\begin{align*}
\Exp_\rho X_H^2 &= \sum_{\rho} \frac{d_\rho^2}{|G|} \frac{\langle \Res_H
\rho, 1\rangle_H^2}{d_\rho^2} = \frac{1}{|G|}  \sum_{\rho} \langle
\Res_{H \times H}
\chi_\rho \times \chi_\rho^* , 1\rangle_{H \times H}\\
&= \frac{1}{|G|}  \left \langle
\Res_{H \times H}
B , 1\right \rangle_{H \times
H} = \frac{1}{|G|} \frac{1}{|H|^2} \sum_{h_1, h_2 \in H} |\{ g\,:\,
h_1^{-1} g h_2 = g\}|\,.
\end{align*}
Thus
\begin{align*}
\Var_\rho[X_H] &= \Exp_\rho X_H^2 - \frac{1}{|H|^2} 
= \frac{1}{|H|^2} \left( \sum_{(h_1, h_2) \in H \times H} \Pr_{g}[h_1 = g^{-1} h_2 g] \;-\; 1 \right) \\
&= \frac{1}{|H|^2} \sum_{\substack{(h_1, h_2) \in H
    \times H\\(h_1, h_2) \neq (1,1)}} \Pr_{g}[h_1 = g^{-1} h_2 g] 
= \frac{1}{|H|^2} \sum_{\substack{h \in H\\h \neq 1}} \frac{|h^G \cap H|}{|h^G|} 
\leq \frac{1}{|H|} \sum_{\substack{h \in H\\h \neq 1}} \frac{1}{|h^G|} \,,
\end{align*}
where $h^G = \{ g^{-1} h g : g \in G\}$ is the conjugacy class of $h$ in $G$.

Applying Chebyshev's inequality, we conclude that
\begin{equation}
\label{eq:general-restriction}
\Pr_\rho[ X_H = 0 ] 
\leq \Pr\left[ \Bigl|X_H - \frac{1}{|H|}\Bigr| \geq \frac{1}{|H|}\right] 
\leq |H| \sum_{\substack{h \in H\\h \neq 1}} \frac{1}{|h^G|}\,.
\end{equation}

\section{Groups of the form $G=K^n$ and the main theorem}

We consider now the groups $G=K^n$, where $K$ is a finite group with no center.  We will prove the following.
\begin{theorem}
\label{thm:main}
Let $K$ be a finite group with center $\{1\}$ and let $G=K^n$.  Let $t$ be such that $2|K|^t \le \alpha \sqrt{n}$, let $\vec{h_1}, \ldots, \vec{h_t}$ be independent elements selected uniformly from $G$, and let $\rho = \rho_1 \otimes \cdots \otimes \rho_n$ be chosen according to the Plancherel measure on $\wG$.  Then 
  \[
  \Pr_{\rho,\{\vec{h_i}\}}[ X_H =0] 
  \leq \frac{2 \alpha}{\sqrt{n}} + \left( 2^{-1/\alpha} |K|^{\alpha}\right)^{\sqrt{n}}\,,
  \]
where $H$ is the subgroup generated by the elements $\vec{h_1},
\ldots, \vec{h_t}$ and $X_H$ is defined as in~\eqref{eq:xh}.  It
follows that there exists a representation $\rho = \rho_1 \otimes
\cdots \otimes \rho_n$ so that
\[
  \Pr_{\{\vec{h_i}\}}\left[ \Bigl\| \frac{1}{t} \sum_i \rho(\vec{h_i}) \Bigr\|_\text{op} < 1\right] 
  \leq \frac{2 \alpha}{\sqrt{n}} + \left( 2^{-1/\alpha} |K|^{\alpha}\right)^{\sqrt{n}}\,.
  \]
\end{theorem}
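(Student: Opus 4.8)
My plan is to combine the general estimate \eqref{eq:general-restriction} with the special ``diagonal'' structure that random generators force on subgroups of $K^n$. Since \eqref{eq:general-restriction} gives $\Pr_\rho[X_H = 0]\le |H|\sum_{h\ne 1}1/|h^G|$ for each fixed $H$, and conjugation in $K^n$ is coordinatewise, I first record that $|h^G| = \prod_{i=1}^n |h_i^K|$; because $K$ has trivial center every nonidentity coordinate satisfies $|h_i^K| = [K:C_K(h_i)]\ge 2$, so $1/|h^G|\le 2^{-w(h)}$, where $w(h)$ is the number of nonidentity coordinates of $h$. Thus it suffices to control $\Exp_{\{\vec{h_i}\}}\bigl[\min(1,\,|H|\sum_{h\ne 1}2^{-w(h)})\bigr]$ over the random generators.

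The key structural observation is that a coordinate $i$ enters only through its \emph{type} $\tau_i = (h_{1,i},\dots,h_{t,i})\in K^t$: every element of $H$ is a word $w$ in the generators evaluated coordinatewise, so two coordinates of equal type agree in every element of $H$. Hence $H$ is diagonal on each type class, $|H|$ equals the order of its projection to one representative per type, and any nonidentity $h\in H$ is nontrivial on an entire active type class and so has weight at least the size of the smallest active class that occurs, where a type is active when it is not all-identity and $C_\tau$ denotes the number of coordinates of type $\tau$. There are at most $|K|^t-1\le \tfrac12\alpha\sqrt n$ active types, whence $|H|\le |K|^{\,|K|^t}\le |K|^{\,\alpha\sqrt n/2}$, while the $n$ types are i.i.d.\ uniform among the $|K|^t$ possible values and $|K|^t\le\tfrac12\alpha\sqrt n$, so each occupancy $C_\tau$ has mean $n/|K|^t\ge 2\sqrt n/\alpha$. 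This is exactly the birthday regime the hypothesis $2|K|^t\le\alpha\sqrt n$ puts us in: with high probability every active class has size at least $\sqrt n/\alpha$.

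I would then split on the event $\mathcal E$ that every active type class has size at least $\sqrt n/\alpha$. On $\mathcal E$ every nonidentity $h\in H$ has $w(h)\ge \sqrt n/\alpha$, so $|H|\sum_{h\ne1}2^{-w(h)}\le |H|^2\,2^{-\sqrt n/\alpha}\le\bigl(|K|^{\alpha}2^{-1/\alpha}\bigr)^{\sqrt n}$, which is the second term. On $\mathcal E^{c}$ I bound the probability by $1$ and estimate $\Pr[\mathcal E^{c}]$ by a moment computation on the occupancies $C_\tau$, which yields the polynomial term $2\alpha/\sqrt n$. The main obstacle, I expect, is precisely this control of low-weight elements: a priori $H$ is generated by words of unbounded length, so one cannot union-bound over a fixed candidate set; the diagonal/type reduction is what tames this, after which everything reduces to an occupancy (balls-in-bins) estimate, and the delicate part is making the two error terms come out in the stated closed form. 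Finally, writing the bound as $\Exp_\rho\Pr_{\{\vec{h_i}\}}[X_H=0]\le B$ produces a single $\rho = \rho_1\otimes\cdots\otimes\rho_n$ with $\Pr_{\{\vec{h_i}\}}[X_H = 0]\le B$; for that $\rho$, whenever $X_H>0$ the trivial subrepresentation of $\Res_H\rho$ provides a unit vector fixed by each $\rho(\vec{h_i})$, forcing $\opnorm{\tfrac1t\sum_i\rho(\vec{h_i})}=1$, so $\{\opnorm{\tfrac1t\sum_i\rho(\vec{h_i})}<1\}\subseteq\{X_H=0\}$ and the claimed inequality follows.
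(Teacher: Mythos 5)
Your proposal follows the paper's proof essentially step for step: the restriction bound \eqref{eq:general-restriction}, the coordinate-type partition (the paper's function $\vec{s}$ and its minimum class size $d$), the subgroup-size bound $|H|\le |K|^{|K|^t}$ of Lemma~\ref{lem:subgroup-size}, the Chebyshev-plus-union-bound occupancy estimate of Lemma~\ref{lem:support}, the centerless-group inequality $|\vec{h}^{K^n}|\ge 2^{\supp(\vec{h})}$, and the final conditioning on the good event that every type class is large, with the same closing observation that $X_H>0$ forces a fixed unit vector and hence operator norm $1$. The only step you leave unexecuted --- that the occupancy failure probability equals $2\alpha/\sqrt{n}$ --- is precisely the paper's Lemma~\ref{lem:support}, whose stated bound $4|K|^{2t}/n$ the paper itself converts to $2\alpha/\sqrt{n}$ with equally loose constant bookkeeping, so your outline is faithful to the published argument.
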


If $\Exp_{\vec{h} \in H} \opnorm{\rho(\vec{h})} = 1$, then certainly $\Exp_{\{ \vec{h_i}\}} \opnorm{\rho(\vec{h}_i)} = 1$.  Thus, with high probability in the group elements $\{ \vec{h}_i \}$, almost all representations $\rho$ have norm $1$ when averaged over the $\vec{h}_i$, where ``almost all'' is defined with respect to the Plancherel measure.

Note that for any fixed $K$, there is an $\alpha > 0$ such that $\Pr[X_H = 0] = o(1)$.  Thus in order to bring the expected norm of $\rho$ down to $1-\eps$ for any constant $\eps > 0$, we need $t = \Omega(\log n) = \Omega(\log \log |G|)$ random elements.  Compare this to the Alon-Roichman theorem~\cite{AR94}, which states that $O(\log |G|)$ random elements suffice with high probability to bound the norm of \emph{every} irrep $\rho$, and thus turn $G$ into an expander.  We have not attempted to close this gap.

Anticipating the proof, we collect a few facts about subgroups of $K^n$. Let $\pi_i: K^n \rightarrow K$ be the homomorphism that
projects onto the $i$th coordinate of $K^n$. For a sequence $\vec{h_1}, \ldots, \vec{h_t}$ of $t$ elements of
$K^n$, define the function $\vec{s}: \{ 1, \ldots, n\} \rightarrow K^t$ by the rule
$$
\vec{s}(i) = ( \pi_i(\vec{h_1}), \ldots, \pi_i(\vec{h_t}))\,.
$$ 
Observe that if $\vec{s}(i) = \vec{s}(j)$ then the $i$th and $j$th coordinates of any product of the $\vec{h_i}$ are identical. 
\begin{lemma}
  \label{lem:subgroup-size}
  Let $H$ be the subgroup of $K^n$ generated by $t$ elements $\vec{h_1}, \ldots, \vec{h_t}$. Then $|H| \leq |K|^{|K|^t}$.
\end{lemma}
\begin{proof}
  As $K^t$ has cardinality $|K|^t$, the coordinates of $K^n$ can be
  partitioned into no more than $|K|^t$ sets, each of which is
  constant for any product of the $\vec{h_i}$. It follows that they
  generate a group of size no more than $|K|^{|K|^t}$.
\end{proof}

For an element $\vec{g} = (g_1, \ldots, g_n) \in K^n$, let $\supp(\vec{g}) = |\{ i : g_i \neq 1\}|$ denote the size of the support of $\vec{g}$. We extend the definition to subgroups: for a subgroup $H < K^n$, define
$$
\supp(H) = \min_{\substack{\vec{h} \in H\\ \vec{h} \neq \vec{1}}} \supp(\vec{h})\,. 
$$
While the exact relationship between $\supp(H)$ and its generators may
be complicated, we can give a lower bound for $\supp(H)$ in terms of the
partition formed by the function $\mathbf{s}$ defined
above. Specifically, for a sequence of elements $\vec{h_1}, \ldots,
\vec{h_t}$, let
$$
d = d(\vec{h_1}, \ldots, \vec{h_t}) 
= \min_{\vec{s} \in K^t} \,\abs{ \left\{ i : \vec{s}(i) = \vec{s} \right\} } 
$$
denote the size of the smallest set in the partition, 
and observe that if $\vec{h}$ is a product of some sequence of the
$\vec{h_i}$ not equal to $\vec{1}$, then $\supp(\vec{h}) \geq
d$. In particular, if $H$ is the subgroup generated by the
$\vec{h_i}$, we have $\supp(H) \geq d(\vec{h_1}, \ldots, \vec{h_t})$.

\begin{lemma}
\label{lem:support}
Let $\vec{h_1}, \ldots, \vec{h_t}$ be $t$ uniformly random elements of $K^n$. Then
$$
\Pr\left[ d(\vec{h_1}, \ldots, \vec{h_t}) \leq \frac{n}{2 |K|^t}
\right] \leq \frac{4 |K|^{2t}}{n}\,.
$$
\end{lemma}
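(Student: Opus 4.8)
The plan is to recognize $d(\vec{h_1}, \ldots, \vec{h_t})$ as the minimum occupancy in a balls-into-bins experiment and then control it by a routine second-moment argument. The conceptual step is to notice the independence structure hidden in the definition of $\vec{s}$. Since each $\vec{h_j}$ is uniform on $K^n$, its coordinates $\pi_1(\vec{h_j}), \ldots, \pi_n(\vec{h_j})$ are independent and uniform on $K$; and since $\vec{h_1}, \ldots, \vec{h_t}$ are mutually independent, the values $\vec{s}(1), \ldots, \vec{s}(n)$ are themselves independent, each uniform on $K^t$. Thus the $\vec{s}(i)$ behave exactly like $n$ balls thrown independently and uniformly into $m := |K|^t$ bins indexed by the elements of $K^t$.

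Under this identification, for each fixed $\vec{s} \in K^t$ the count $N_{\vec{s}} := \abs{\{ i : \vec{s}(i) = \vec{s}\}}$ is distributed as $\mathrm{Bin}(n, 1/m)$, with mean $\mu := n/m$ and variance $\Var[N_{\vec{s}}] = n \cdot \frac{1}{m}\bigl(1 - \frac{1}{m}\bigr) \leq \mu$, and by construction $d = \min_{\vec{s}} N_{\vec{s}}$. The event $d \leq n/(2m)$ is precisely the event that some single bin is at most half-full, i.e. that $N_{\vec{s}} \leq \mu/2$ for some $\vec{s}$.

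I would then bound the single-bin lower tail by Chebyshev's inequality, using $\Var[N_{\vec{s}}] \le \mu$:
$$\Pr\left[ N_{\vec{s}} \leq \frac{\mu}{2} \right] \leq \Pr\left[ \abs{N_{\vec{s}} - \mu} \geq \frac{\mu}{2} \right] \leq \frac{4\,\Var[N_{\vec{s}}]}{\mu^2} \leq \frac{4}{\mu} = \frac{4m}{n}\,.$$
A union bound over the $m$ bins (valid without any independence among the $N_{\vec{s}}$) then yields
$$\Pr\left[ d \leq \frac{n}{2m} \right] \leq m \cdot \frac{4m}{n} = \frac{4m^2}{n} = \frac{4|K|^{2t}}{n}\,,$$
which is exactly the claimed bound after substituting $m = |K|^t$.

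There is no genuine obstacle here: the only nontrivial move is the reduction to the balls-into-bins picture via the independence of the $\vec{s}(i)$, after which the estimate follows from elementary concentration. If a sharper tail were wanted one could replace Chebyshev by a multiplicative Chernoff bound on the binomial lower tail, but the Chebyshev estimate already produces the polynomial factor $4|K|^{2t}/n$ needed to feed into Theorem~\ref{thm:main}.
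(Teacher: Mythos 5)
Your proposal is correct and matches the paper's proof essentially step for step: both identify each count $\abs{\{i : \vec{s}(i) = \vec{s}\}}$ as a $\mathrm{Bin}(n, 1/|K|^t)$ variable, apply Chebyshev's inequality to bound the deviation by $4|K|^t/n$, and finish with a union bound over the $|K|^t$ bins. The balls-into-bins framing is a nice way to phrase the reduction, but it is the same argument.
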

\begin{proof}
For a given element $\vec{s} \in K^t$, let $X_{\vec{s}}$ be the random variable equal to the size of the corresponding set in the partition, $\{ i : \vec{s}(i) = \vec{s} \}$.  Since $X_{\vec{s}}$ is binomially distributed as $\textrm{Bin}(n,1/|K|^t)$, we have $\Exp X_{\vec{s}} = n/{|K|^t}$ and 
$\Var X_{\vec{s}} \le n/|K|^t$.  By Chebyshev's inequality, 
  $$
  \Pr\left[ \left| X_{\vec{s}} - \frac{n}{|K|^t}\right| 
  \geq \frac{n}{2 |K|^t} \right] \leq \frac{4|K|^t}{n} \, .
  $$
Since there are $|K|^t$ elements of $K^t$, the union bound implies the statement of the lemma.
\end{proof}
\noindent
Note that the random variable $X_{\vec{s}}$ obeys the Chernoff bound, making it much more concentrated than the second moment calculation we did here suggests, but this is not important to our results.

We finally return to the proof of Theorem~\ref{thm:main}.
\begin{proof}[Proof of Theorem~\ref{thm:main}]
Let $\vec{h_1}, \ldots, \vec{h_t}$ be $t$ elements chosen independently and uniformly at random from $K^n$, and let $H$ denote the subgroup they generate. Additionally, let $\rho = \rho_1 \otimes \cdots \otimes \rho_n$ be a representation of $K^n$ selected according to the Plancherel measure.  Then with $d = d(\vec{h_1}, \ldots, \vec{h_t})$ as above,
\begin{equation}
\label{eq:pr-decomp}
\begin{split}
   \Pr_{\rho, \{\vec{h_i}\}}[ X_H =0] 
   &\leq  
   \Pr_{\{ \vec{h_i}\}}\left[ d < \frac{n}{2 |K|^t} \right] 
 + \Pr_{\rho, \{\vec{h_i}\}}\left[ X_H = 0 \;\;\vrule\;\; d \geq \frac{n}{2 |K|^t}\right] \\
    &\leq \frac{4 |K|^{2t}}{n}  + \max_{\substack{\text{$\{ \vec{h_i}\}$ such that}\\ d \geq \frack{n}{2 |K|^t}}} 
\Pr_\rho \left[ X_H = 0\right] \, ,
 \end{split}
 \end{equation}
 the second line following from Lemma~\ref{lem:support}.

As $K$ has no center, any nontrivial element $g \in K$ has at least two conjugates.  In particular, the centralizer $Z_g = \{ h \mid g = h^{-1} g h \}$ is a proper subgroup of $K$.  It follows that for an element $\vec{h} \ne 1$ of $K^n$ the conjugacy class $\vec{h}^{K^n}$ has cardinality $|\vec{h}^{K^n}| \geq 2^{\supp(\vec{h})}$.  If $d \geq \frack{n}{2 |K|^t}$, then $\supp(H) \geq \frack{n}{2 |K|^t}$ and
 $$
 |H| \sum_{\vec{h} \neq \vec{1}} \frac{1}{|\vec{h}^{K^n}|} 
 \leq |H|^2 \,2^{-\supp(H)} 
 \leq |K|^{2|K|^t} 2^{-n/2|K|^t}\,,
 $$
 by Lemma~\ref{lem:subgroup-size}.  Writing $2|K|^t \le \alpha \sqrt{n}$, we have
 $$
 |H| \sum_{\vec{h} \neq \vec{1}} \frac{1}{|\vec{h}^{K^n}|} 
 \leq \left( 2^{-1/\alpha} |K|^\alpha \right)^{\sqrt{n}} \, . 
 $$
Combining equations~\eqref{eq:general-restriction} and~\eqref{eq:pr-decomp} gives 
\[
\Pr_{\rho, \{\vec{h_i}\}}[ X_H =0] 
\leq \frac{2 \alpha}{\sqrt{n}} + \left( 2^{-1/\alpha} |K|^{\alpha}\right)^{\sqrt{n}}\,,
\]
completing the proof.
\end{proof}


\begin{thebibliography}{1}

\bibitem{AR94}
Noga Alon and Yuval Roichman.
\newblock Random {Cayley} graphs and expanders.
\newblock {\em Random Struct. Algorithms}, 5(2):271--285, 1994.

\bibitem{BG08}
Jean Bourgain and Alexander Gamburd.
\newblock On the spectral gap for finitely-generated subgroups of
  $\textsf{SU}(2)$.
\newblock {\em Inventiones Mathematicae}, 171:83--121, 2008.

\end{thebibliography}

\end{document}